\newtheorem{thm}{Theorem}[section]
\newtheorem{prop}[thm]{Proposition}
\newtheorem{remark}[thm]{Remark}
\newcommand{\R}{\mathbb{R}}
\newcommand{\Z}{\mathbb{Z}}
\newcommand{\bW}{\mathbf{W}}
\newcommand{\bB}{\mathbf{B}}
\newcommand{\bD}{\mathbf{D}}
\newcommand{\be}{\mathbf{e}}
\newcommand{\ba}{\mathbf{a}}
\newcommand{\bc}{\mathbf{c}}
\newcommand{\bd}{\mathbf{d}}
\newcommand{\by}{\mathbf{y}}
\newcommand{\bxe}{\mathbf{xe}}
\newcommand{\ie}{i.e.,\ }
\newcommand\bal{{\boldsymbol\alpha}}
\newcommand{\hby}{\hat \by}
\newcommand{\hba}{\hat \ba}
\newcommand{\hy}{\hat y}
\newcommand{\ha}{\hat a}
 \newcommand{\E}{{\mathbb E}}
\begin{document}

\title{
Reproduction Capabilities of Penalized Hyperbolic-polynomial Splines}

\author{Rosanna Campagna$^{+}$, Costanza Conti$^{*}$\\
\small{$^+$Dept. of Mathematics and Physics, University of Campania ``L. Vanvitelli'', Italy} \\
 \small{$^{*}$Department of Industrial Engineering, DIEF, University of  Firenze, Italy}\\
  \small{ \texttt{rosanna.campagna@unicampania.it;  costanza.conti@unifi.it} }
}

 \maketitle

\begin{abstract}
This paper investigates two important analytical properties of hyperbolic - polynomial penalized splines, HP-splines for short. HP-splines, obtained by combining a special type of difference penalty with hyperbolic-polynomial B-splines (HB-splines), were recently introduced by the authors as a generalization of P-splines. HB-splines  are bell-shaped basis functions consisting of segments made of real exponentials $e^{\alpha x},\, e^{-\alpha x}$ and linear functions multiplied by these exponentials, $xe^{+\alpha x}$ and $xe^{-\alpha x}$. Here, we show that these type of penalized splines reproduce 
function in the space $\{e^{-\alpha x},\ x e^{-\alpha x}\}$, that is  they  fit exponential data exactly.
Moreover, we show  that they conserve  the first and second \lq exponential\rq\ moments. 
\end{abstract}

\textbf{Keywords.} 
 Hyperbolic-polynomial splines, Penalized splines,  Discrete penalty, P-splines, B-splines.

\section{Introduction}\label{sec:1}
This short paper investigates the reproduction capabilities of  hyperbolic-polynomial penalized splines.  HP-splines, were  recently introduced in \cite{CC2021} as a generalization of  the better known P-splines (see \cite{EilersMarx96, EMD2015}), and combine  a finite difference penalty with HB-splines that  piecewise consist of real exponentials and monomials multiplied by these exponentials. 
Numerical examples show that the exponential nature of HP-splines may turn out to be useful in applications when the data show an exponential trend  \cite{CampagnaContiCuomo2020}.

The HP-splines  we consider have segments in  the four-dimensional space
\begin{equation}\label{def:E4}
\E_{4,\alpha}:=\textrm{span}\{e^{\alpha x},\ x\, e^{\alpha x},\ e^{-\alpha x},\  x\, e^{-\alpha x}\},\ \alpha \in  \R,
\end{equation}
with the  frequency $\alpha$ being an extra parameter to tune the smoother effects. Even though all details concerning their definition and construction can be already find in \cite{CC2021}, the analysis of their reproduction capability is there missing. To fill the gap, here we show that these type of penalized splines reproduce 
function in the space $\{e^{-\alpha x},\ x e^{-\alpha x}\}$, that is fit exponential data of the latter type exactly.
We also show  that they conserve  the first and second \lq exponential\rq\ moments, showing that HP-splines are the natural generalization of P-splines even with respect to reproduction and moment preservation.

\smallskip  Given the data points $(x_i,y_i),\ i=1,\ldots, m$, $x_1<\cdots <x_m$, the uniform knot partition $\Xi: = \{x_1:=a=\xi_1<\xi_2 \cdots  <\xi_n=b=:x_m\}$ with knots distance $h$, and denoting by $\{B^\alpha_0,\cdots, B^\alpha_{n+1}\}$ a HB-splines basis of the spline space with segments in $\E_{4,\alpha}$, 
 the HP-spline approximating the given data is obtained by solving the minimization problem
\begin{equation}\label{def:problemEP}
\min_{a_0,\ldots, a_{n+1}}
\sum_{i=1}^m w_i\left(y_i-\sum_{j=0}^{n+1}a_j B^\alpha_j(x_i)\right)^2  
+ \lambda  \sum_{j=2}^{n+1}\left((\Delta_2^{h,\alpha} \ba)_j \right)^2,
\end{equation}
where  the minimum is   with respect to the HB-splines coefficients $\ba=(a_j)_{j=0}^{n+1}$. The values $(w_1,\ldots, w_m)$ are non-zero weights,  $\Delta^{h,\alpha}_2$ is the difference operator acting on functions and on sequences respectively as (see \cite{CONTI2022126796} for this type of operators), 
$$
\Delta^{h,\alpha}_2\,u=u(x)-2e^{-\alpha h}u(x-h)+e^{-2\alpha h}u(x-2h),\quad \hbox{$h>0$ the knots distance}
$$
$$
(\Delta^{h,\alpha}_2\,\ba)_j=a_j-2e^{-\alpha h}a_{j-1}+e^{-2\alpha h}a_{j-2},\quad j\in \Z,$$    
and $\lambda$ is a 
regularization parameter that can be set in several different ways,
e.g. with the discrepancy principle, the generalized cross-validation,
or the L-curve method (see \cite{Hansen_book}, for example) \\
It is not difficult to see that the HP-spline can be written as 
\begin{equation}\label{def:hp}
s_{hp}(x)=\sum_{j=0}^{n+1}\ha_j B^\alpha_j(x),
\end{equation}
where $\hba=(\ha_0,\ldots, \ha_{n+1})^T\in  {\mathbb R}^{(n+2)}$
 is the solution of the linear system
\begin{equation}\label{expansion2}
\left(
{\bB^\alpha}^T{\mathbf W}
{\bB^\alpha}+ \lambda  
{\bD^{h, \alpha}_2}^T
{\bD^{h, \alpha}_2}\right)\ba={\bB^\alpha}^T{\mathbf W}{\mathbf y},
\end{equation} 
where ${\bf y}=(y_1,\ldots, y_{m})^T\in  {\mathbb R}^{m}$, 
${\bB^\alpha}\in {\mathbb R}^{m\times (n+2)}$ is the banded  collocation matrix ${\bB}^{\alpha}:=(B^{\alpha}_j(x_i))_{i=1,\ldots,m}^{j=0,\ldots,n+1}$,
${\bW}$ is the diagonal matrix $${\bW}:=(diag(w_i))_{i=1,\ldots,m} \in {\mathbb R}^{m\times m}$$ and  
\[{\bD}_2^{h, \alpha}= \left[ \begin{array}{cccccc}
1       & -2e^{-\alpha \,h} & e^{-2\alpha \,h}        & 0       & \cdots      & 0 \\
  0        & 1       & -2e^{-\alpha \,h} & e^{-2\alpha\, h}        & \cdots      & 0 \\
\vdots     & \ddots             & \ddots     & \ddots  &  \cdots                  & \vdots\\
\vdots     & \vdots     & \vdots      & 1       & -2e^{-\alpha\, h} & e^{-2\alpha\, h}       \\
\end{array} \right]\in{\mathbb R}^{n\times (n+2)}.\]
Note that for $\alpha\rightarrow 0$ the space $\E_{4,\alpha}$ reduces to $\{1, x, x^2, x^3\}$,  HB-splines reduce to cubic B-splines and the difference operator $\Delta^{h,\alpha}_2$ reduces to the standard forward second order difference operator acting on a sequence $\ba$ as $(\Delta_2\,\ba)_j=a_j-2a_{j-1}+a_{j-2}$. Therefore, for $\alpha=0$ HP-splines coincide with P-splines based on classical cubic B-splines proposed by Eilers and Max (see their recent monograph \cite{eilers_marx_2021}).  From now on, without loss of generality, we continue by assuming that ${\bW}$ is the identity matrix. 

\smallskip P-splines are known to have a number of useful properties, essentially inherited from B-splines and from the special type of penalty: they can fit polynomial data exactly, they can conserve the first  two moments of the data and show no boundary effects. The aim of this paper is to investigate similar reproduction properties of HP-splines.  As in the polynomial case, the HP-spline properties are essentially inherited from HB-splines and this is why  in Section \ref{sec:2}  we first prove that HB-splines reproduce $\E_{4,\alpha}$. Then, in Section \ref{sec:3}, we show that, whatever the value of the smoothing parameter $\lambda$, HP-splines fit exponential data exactly as they reproduces 
$\E_{2,-\alpha}:=\{e^{-\alpha x},\ x e^{-\alpha x}\}$. Moreover, we show that HP-splines conserve  the first and second \lq exponential\rq\ moments. Section \ref{sec:conclusion} draws conclusion and  highlights future works.
\section{HB-splines and their reproduction properties} \label{sec:2}
  As shown in \cite{Unser}, in the \lq cardinal\rq\, situation --corresponding to integer spline knots--, HB-splines can be defined through convolution. For the spline space with segment in $\E_{4,\alpha}$, 
 starting with the first order B-spline
\begin{equation}\label{def:B1}
B^1_{\alpha}(x)=e^{\alpha\,x}\chi_{[0,1]}(x),
\end{equation}
supported in $[0,1]$, the four order cardinal HB-spline supported on $[0,4]$ is obtained as
$$
B^1_{\bal}=\left( B^1_{\alpha}\ast  B^1_{\alpha}\ast B^1_{-\alpha}  \ast B^1_{-\alpha} \right)\quad\hbox{with}\quad \bal=(\alpha,\alpha,-\alpha, -\alpha),
$$
hence,  for any integer $k$, the HB-spline supported in $[k, k+4]$ is obtained by translation as $B^1_{\bal}(\cdot-k)$.
In case the knots are uniform but with a distance $h\neq 1$,   the corresponding HB-splines are defined by dilation, 
\begin{equation}\label{def:dilatata}
B^h_{\bal}=B^1_{\bal}(\frac \cdot h)=\left( B^1_{\alpha}\ast  B^1_{\alpha}\ast B^1_{-\alpha}  \ast B^1_{-\alpha} \right)(\frac \cdot h),
\end{equation}
and then translation. 
Alternatively, we directly start with the scaled order-one HB-spline $B^h_{\alpha}(x)=e^{\frac{\alpha}{h} x }\chi_{[0,h]}(x)$ and use repeated  convolution. In that case we  see that when dealing with grid spacing $h$, the frequency $\alpha$ is scaled into $\frac{\alpha}{h}$, a fact that will also enter into the exponential reproduction discussion we are going to make.

\smallskip Concerning the  HB-spline reproduction, we prove the following result.
\begin{prop}\label{prop:HBreproduction}
Let $\bal=(\alpha,\alpha,-\alpha, -\alpha)$ be the vector of frequencies,  and $B^h_{\bal h}$ the HB-spline function with uniform knots defined in \eqref{def:dilatata} having support $[0,4h]$.
Then, 
$$
f=\sum_{k\in \Z} c^f_k B^h_{\bal h}(\cdot-hk),\quad \hbox{for all}\quad f\in \E_{4,\alpha}.
$$
\end{prop}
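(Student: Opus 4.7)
The argument will exploit the convolution definition $B^1_{\bal}=B^1_\alpha*B^1_\alpha*B^1_{-\alpha}*B^1_{-\alpha}$ in (7) together with a ``peel off one convolution factor'' trick on the coefficient sequence. I first reduce to $h=1$: via the dilation rule $B^h_{\bal h}(x)=B^1_{\bal h}(x/h)$, the substitution $y=x/h$ converts the claim into the statement that the unit-scale HB-spline $B^1_{\bmu}$, with $\bmu=(\mu,\mu,-\mu,-\mu)$ and $\mu:=\alpha h$, reproduces $\E_{4,\mu}$ through integer shifts. I may therefore assume $h=1$ and an arbitrary frequency $\mu\in\R$.

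The key elementary observation is that the order-one B-spline $B^1_\mu$ already reproduces its own matching exponential through integer shifts:
$$\sum_{k\in\Z} e^{\mu k}\,B^1_\mu(x-k)=e^{\mu x},$$
which holds pointwise because, since $B^1_\mu$ is supported on $[0,1]$, on each interval $[m,m+1)$ only the $k=m$ summand survives and evaluates to $e^{\mu m}e^{\mu(x-m)}$. Viewing $\sum_k e^{\mu k}B^1_{\bmu}(x-k)$ as the convolution of $B^1_{\bmu}$ with the discrete measure $\sum_k e^{\mu k}\delta_k$, I peel off one $B^1_\mu$ factor using the above identity and then apply the trivial formula $(g*e^{\mu\cdot})(x)=e^{\mu x}\int g(t)e^{-\mu t}\,dt$ to each of the three remaining convolution factors. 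This yields $M_+(\mu)\,e^{\mu x}$ with an explicitly nonzero constant $M_+(\mu)$, so dividing by $M_+(\mu)$ gives the reproduction of $e^{\mu x}$; a mirror-image argument (peeling off $B^1_{-\mu}$ with coefficients $e^{-\mu k}$) reproduces $e^{-\mu x}$.

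For the ``linear $\times$ exponential'' elements $xe^{\pm\mu x}$ I apply the same peeling trick to the coefficient sequence $c_k=k\,e^{\mu k}$. The analogue of the elementary identity now reads $\sum_k k\,e^{\mu k}B^1_\mu(x-k)=\lfloor x\rfloor\,e^{\mu x}$, and a short direct calculation gives $B^1_\mu*\bigl(\lfloor\cdot\rfloor e^{\mu\cdot}\bigr)(x)=(x-1)\,e^{\mu x}$, so a single further convolution already smooths away the integer-part. Iterating the two remaining $B^1_{-\mu}$-convolutions produces an explicit linear combination of $xe^{\mu x}$ and $e^{\mu x}$, and subtracting a suitable multiple of the previously obtained reproduction of $e^{\mu x}$ isolates $xe^{\mu x}$; the symmetric case covers $xe^{-\mu x}$, and linearity concludes the proof since the four resulting functions span $\E_{4,\mu}$. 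The main technical obstacle is precisely this last step: the bookkeeping of the three successive convolutions must be carried out carefully enough to certify that the coefficient of $xe^{\mu x}$ in the final combination is explicitly nonzero, ensuring that the $2\times 2$ linear system which extracts $xe^{\mu x}$ is invertible.
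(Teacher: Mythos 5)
Your proposal is correct and shares the core mechanism of the paper's proof: both exploit the convolution factorization of $B^1_{\bal}$, establish reproduction for a partial convolution product, and then push it through the remaining factors using the fact that convolving $e^{\mu x}$ or $xe^{\mu x}$ with a compactly supported kernel lands back in $\lspan\{e^{\mu x},xe^{\mu x}\}$ in a triangular way. The genuine difference is the base case. The paper quotes Unser's Proposition~2 as a black box for the order-two identities $\sum_{k}e^{\alpha k}B^1_{\alpha,\alpha}(x-k)=e^{\alpha x}$ and $\sum_{k}(k+1)e^{\alpha k}B^1_{\alpha,\alpha}(x-k)=xe^{\alpha x}$, and then convolves once with $B_{-\alpha,-\alpha}$; you instead start one level lower, verifying directly the order-one identities $\sum_{k}e^{\mu k}B^1_\mu(x-k)=e^{\mu x}$ and $\sum_{k}k\,e^{\mu k}B^1_\mu(x-k)=\lfloor x\rfloor e^{\mu x}$ and then peeling off three convolution factors. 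What this buys is self-containedness (no external citation) and, more importantly, an explicit non-degeneracy check: you verify that the multiplier $M_+(\mu)=\int(B^1_\mu*B^1_{-\mu}*B^1_{-\mu})(t)e^{-\mu t}\,dt$ and the coefficient of $xe^{\mu x}$ produced by the successive convolutions are strictly positive, so the triangular $2\times 2$ extraction system is invertible. The paper leaves this implicit when it writes $a_0f_1=\cdots$ and $b_0f_1+b_1f_2=\cdots$ without verifying $a_0,b_1\neq 0$. Two small points to tidy when you write out the details: with the closed indicator $\chi_{[0,1]}$ the order-one identities fail exactly at the integers (two summands survive there), so either use $\chi_{[0,1)}$ or note that almost-everywhere equality suffices because everything is subsequently convolved; and in the dilation step, record that the substitution $x\mapsto x/h$ carries $\E_{4,\alpha h}$ onto $\E_{4,\alpha}$ only up to the harmless factor $1/h$ on $xe^{\pm\alpha x}$, which is absorbed by taking spans.
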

\begin{proof}The starting point is \cite[Proposition 2]{Unser} that yields the particularly simple reproduction formulas for the order-two HB-spline $B^1_{\alpha,\alpha}=B^1_{\alpha}\ast B^1_{\alpha}$ 
$$
\sum_{k\in \Z} e^{\alpha\, k} B^1_{\alpha,\alpha}(x-k)=e^{\alpha\, x} , \quad \sum_{k\in \Z} (k+1)e^{\alpha\, k} B^1_{\alpha,\alpha}(x-k)=xe^{\alpha\, k},
$$
providing, for $t_k=kh$, the reproduction formula for $e^{\alpha\, x}$. In fact, from $e^{\frac{\alpha}{h}\, x}
=\displaystyle{\sum_{k\in \Z} e^{\frac{\alpha}{h}\, t_k} B^h_{\alpha,\alpha}(x-t_k)}$ we arrive at  
\begin{equation}\label{def:E2gen0}
e^{\alpha\, x}=\sum_{k\in \Z} e^{\alpha\, t_k} B^h_{\alpha h,\alpha h }(x-t_k).
\end{equation}
Similarly, $\sum_{k\in \Z} (k+1)e^{\alpha\, k} B_{\alpha,\alpha}(x-k)=xe^{\alpha\, k}$ gives $$\frac{x}{h}e^{\frac{\alpha}{h}\, x}=\displaystyle{\sum_{k\in \Z} (k+1)e^{\frac{\alpha}{h}\, kh} B^h_{\alpha,\alpha}(x-hk)}$$ and hence 
the reproduction formula for $xe^{\alpha\, x}$:
\begin{equation}\label{def:E2gen1}
 x e^{\alpha\, x}=\sum_{k\in \Z} (t_k+h)e^{\alpha \, t_k} B^h_{\alpha h,\alpha h}(x-t_k).
\end{equation}
Next, we use the fact that the convolution product of $B^h_{-h\alpha,-h\alpha}$ with the functions $f_1(x)=e^{\alpha x}$ or $f_2(x)=xe^{\alpha x} $  yields another exponential polynomial of the same type, that is
$$
B^h_{-\alpha h,h\alpha h}\ast f_1=a_0f_1,\quad \hbox{and}\quad  B^h_{-\alpha h, \alpha h}\\ast f_2=b_0f_1+b_1f_2,\quad \hbox{with}\quad a_0,b_0,b_1\in \R.
$$
Therefore, since $B^1_{\bal}=B^1_{\alpha,\alpha}\ast B^1_{-\alpha,-\alpha}$ and  $B^h_{\bal h}=\frac1 h\left(B^h_{\alpha h, \alpha h}\ast B^h_{-\alpha h, \alpha h}\right)$,  if  we convolve both side of  \eqref{def:E2gen0} and \eqref{def:E2gen1} with $B^h_{-\alpha h, -\alpha h }$, we arrive at
$$
a_0f_1=\sum_{k\in \Z} e^{\alpha\, t_k} h B^h_{\bal h}(\cdot-t_k),\quad \hbox{and}\quad b_0f_1+b_1f_2=\sum_{k\in \Z} (t_k+h)e^{\alpha \, t_k} h B^h_{\bal h}(\cdot-t_k),
$$
that are the reproduction formulas  for $B^h_{\bal h}$ of  a function in $\E_{2,\alpha}=\{e^{\alpha x}, xe^{\alpha x}\}$. Similarly we prove the reproduction of  $\E_{2,-\alpha}=\{e^{-\alpha x}, xe^{-\alpha x}\}$ and therefore the reproduction of $\E_{4,\alpha}$.\end{proof}


\section{ HP-splines reproduction of $\E_{2, -\alpha}$ and moment preservation}\label{sec:3}
Based on the HB-splines reproduction properties shown in Proposition \ref{prop:HBreproduction} in this section we show the reproduction capabilities of HP-splines, independently to the value of the smoothing parameter $\lambda$.

\begin{prop}\label{prof:HPreproduction}
Let the data points $(x_i,y_i),\ i=1,\ldots, m$, $x_1<\cdots <x_m$, be given together with the uniform knots partition $\Xi: = \{x_1:=a=\xi_1<\xi_2 \cdots  <\xi_n=b=:x_m\}$  ($n<m$) extended with  the uniform left and right extra knots 
$\xi_{\ell}=\xi_1+(\ell-1) h,\   \ell=0, -1,-2, \ \xi_{n+\ell}=\xi_n+\ell h,\ \ell=1,2,3$ where $h=(b-a)/(n-1)$. Let $\{B^\alpha_0,\ldots,B^\alpha_{n+1}\}$ be the spline basis with segments in $\E_{4,\alpha}$ consisting of the uniform HB-splines $B^\alpha_{0}:=B^h_{\bal h}(\cdot-\xi_{-2})$ and  $B^\alpha_j=B^\alpha_0(\cdot-{jh}),\ j=1,\cdots, n+1$, with $B^h_{\bal h}$ as in \eqref{def:dilatata}. Then, if the data are taken form a function $f\in \E_{2,-\alpha}$,  \ie $y_i=f(x_i),\ i=1,\cdots,m$ with $f\in \E_{2,-\alpha}$, 
 the HP-splines $s_{hp}$ defined in \eqref{def:hp} satisfies 
$$s_{hp}(x_i)=y_i,
\quad i=1,\cdots, m.$$
\end{prop}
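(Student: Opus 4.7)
The plan is to exhibit an explicit coefficient vector $\ba^*\in\R^{n+2}$ that achieves objective value zero in \eqref{def:problemEP}; since the objective is a sum of squares, any minimizer $\hba$ must then also give objective value zero, which forces the data-fit residual to vanish and hence $s_{hp}(x_i)=y_i$ for every $i$. Note that even if the normal-equation matrix in \eqref{expansion2} fails to be invertible, the quantity $\bB^\alpha\hba$ is still uniquely determined by the minimizer, so working with objective-value-zero rather than coefficient-uniqueness is the cleanest route.

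To build $\ba^*$, I would invoke Proposition \ref{prop:HBreproduction} applied to $f\in\E_{2,-\alpha}\subset\E_{4,\alpha}$: it gives coefficients $(c^f_k)_{k\in\Z}$ with $f=\sum_{k\in\Z}c^f_k B^h_{\bal h}(\cdot-hk)$ on all of $\R$. Because $B^h_{\bal h}$ is supported in an interval of length $4h$, only the translates indexed by $k=-2,-1,\dots,n-1$ (the ones whose supports overlap $[a,b]$) contribute on $[a,b]$, and by construction these are precisely the basis functions $B^\alpha_0,\dots,B^\alpha_{n+1}$. Setting $a^*_j:=c^f_{j-2}$ then gives $f(x)=\sum_{j=0}^{n+1}a^*_j B^\alpha_j(x)$ on $[a,b]$, so the data-fit term $\sum_i(y_i-\sum_j a^*_j B^\alpha_j(x_i))^2$ is zero.

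The crux is verifying that the penalty $\sum_{j=2}^{n+1}\bigl((\Delta^{h,\alpha}_2\ba^*)_j\bigr)^2$ also vanishes. Reading off the proof of Proposition \ref{prop:HBreproduction} (with $\alpha$ replaced by $-\alpha$), the reproducing coefficients for $e^{-\alpha x}$ are proportional to $e^{-\alpha t_k}=e^{-\alpha h k}$ and those for $xe^{-\alpha x}$ are proportional to a linear combination of $e^{-\alpha h k}$ and $(t_k+h)e^{-\alpha h k}$. A direct computation shows $\Delta^{h,\alpha}_2$ annihilates each of these sequences: for $c_k=e^{-\alpha h k}$,
\[
c_j-2e^{-\alpha h}c_{j-1}+e^{-2\alpha h}c_{j-2}=e^{-\alpha h j}(1-2+1)=0,
\]
and analogously for $c_k=k\,e^{-\alpha h k}$ the bracketed factor becomes $j-2(j-1)+(j-2)=0$. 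By linearity $\Delta^{h,\alpha}_2\ba^*=0$, so the penalty term is zero and the total objective evaluated at $\ba^*$ is zero.

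The main technical point I would want to nail down is the first step above, namely that the global cardinal reproduction of Proposition \ref{prop:HBreproduction} localizes to the finite basis $\{B^\alpha_0,\dots,B^\alpha_{n+1}\}$ on $[a,b]$ exactly because of the choice of three extra knots on each side of $[a,b]$ in the hypothesis. Once this indexing/support bookkeeping is in hand, the rest is the short algebraic verification above plus the optimality argument.
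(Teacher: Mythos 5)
Your proof is correct and follows the same overall strategy as the paper's: take the coefficient vector furnished by Proposition \ref{prop:HBreproduction}, check that the difference penalty vanishes on it, and conclude that the penalized fit coincides with the exact interpolant. You diverge in the two key steps, and both divergences are worth noting. For the penalty, the paper applies $\Delta^{h,\alpha}_2$ to the function $f$ itself and uses the shift relation among the $B^\alpha_j$ together with linear independence of the HB-splines to read off $(\Delta^{h,\alpha}_2\hba)_j=0$ for $j=2,\dots,n+1$ from \eqref{eqRep2}; you instead verify by direct computation that the discrete operator annihilates the explicit coefficient sequences $e^{-\alpha hk}$ and $k\,e^{-\alpha hk}$. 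Your route needs the explicit form of the reproducing coefficients (which the proof of Proposition \ref{prop:HBreproduction} does supply, up to the constants $a_0,b_0,b_1$ that are irrelevant here by linearity), but it sidesteps the boundary terms $\ha_0 B^\alpha_0+(\ha_1-2e^{-\alpha h}\ha_0)B^\alpha_1$ appearing in \eqref{eqRep2}, which the paper's linear-independence argument has to handle with some care since the truncated expansion only represents $f$ on $[a,b]$. For the conclusion, the paper verifies that the reproducing coefficients satisfy the normal equations \eqref{expansion2}, whereas you argue that the objective attains the value zero and hence every minimizer has zero data residual; your version has the advantage of being insensitive to possible singularity of the matrix in \eqref{expansion2}, at the harmless cost of identifying solutions of \eqref{expansion2} with minimizers of the convex quadratic. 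The one point you flag yourself, the localization of the bi-infinite cardinal expansion to the finite basis, does work out: $B^\alpha_j$ has support $[\xi_{j-2},\xi_{j+2}]$, and these are exactly the translates whose supports meet $(a,b)=(\xi_1,\xi_n)$ for $j=0,\dots,n+1$, which is the same localization the paper assumes silently in writing \eqref{def:E4gen}.
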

\begin{proof}
Form Proposition \ref{prop:HBreproduction} we know that HB-splines reproduce $\E_{4,\alpha}$ meaning that there exists a sequence of coefficients $c^f_j ,\ j=0,\cdots, n+1$ satisfying 
\begin{equation}\label{def:E4gen}
\sum_{j=0}^{n+1} c^f_j  B^\alpha_j(x)=f(x), \quad x\in [a,b],\quad f\in \E_{4, \alpha}.
\end{equation}

With the notation $\hba=(c^f_0,\cdots, c^f_{n+1})$ assuming that the data are taken form a function $f\in \E_{2,-\alpha}$, it is not difficult to see that the solution of the linear system \eqref{expansion2} is exactly $\hba$, since  from \eqref{def:E4gen} we have 

\begin{equation}
y_i=\sum_{j=0}^{n+1}\ha_j B^\alpha_j(x_i),\ i=1,\cdots,m,
\end{equation}
and, for $x\in [a,b]$,
\begin{eqnarray}
 0&=&\Delta^{h,\alpha}_2 f(x)=\sum_{j=0}^{n+1} \ha_j   \Delta^{h,\alpha}_2 B^\alpha_j(x)= \nonumber \\
&=& \sum_{j=2}^{n+1}(\Delta^{h, \alpha}_2 \hat{\ba})_jB^\alpha_j(x)+\ha_0  B^\alpha_0(x)+(\ha_1-2e^{-\alpha h}\ha_0)  B^\alpha_1(x),\label{eqRep2}
\end{eqnarray}
that, in combination with the linear independence of HB-splines, shows that $(\Delta^{h, \alpha}_2 \hat{\ba})_j=0$ for $j=2,\cdots,n+1$. The latter means that the model acts like non penalized regression and that  the reproduction capabilities of the HB-splines transfer to the HP-splines.
\end{proof}

\begin{remark}It is important to remark that, even though HB-splines reproduces $\E_{4,\alpha}$, Proposition \ref{prof:HPreproduction} shows that HP-splines reproduces  $\E_{2,-\alpha}$ only.  This limitation is due to  the specific definition of the difference operator acting as shown in \eqref{eqRep2}. A model based on $\Delta^{h, -\alpha}_2$ rather than  $\Delta^{h, \alpha}_2$ would 
reproduce $\E_{2,\alpha}$.
\end{remark}

\medskip Next, we show that HP-splines preserve the two \lq exponential\rq\  moments
\begin{prop}\label{prof:HPmoments} In the notation of Proposition \eqref{prof:HPreproduction}, denoting by $\hby={\bf B}^\alpha \hba$ the vector of predicted values with elements $\hy_i=s_{hp}(x_i),\ i=1,\cdots,m$, we have 
\begin{equation}\label{eq:moments}
\sum_{i=1}^{m}e^{-\alpha x_i}\hy_i=\sum_{i=1}^{m}e^{-\alpha x_i}y_i,\quad \hbox{and} \quad  \sum_{i=1}^{m}x_ie^{-\alpha {x_i}}\hy_i=\sum_{i=1}^{m}x_ie^{-\alpha{x_i}}y_i.
\end{equation}
\end{prop}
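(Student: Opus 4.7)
The plan is to mimic the classical P-spline moment-preservation argument: read off a residual identity from the normal equations, then pair that identity against the vectors of exponential moments and use the difference operator's annihilation of $\E_{2,-\alpha}$ to conclude.

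First I would extract from \eqref{expansion2} (with $\bW=I$) the residual relation
\[
{\bB^\alpha}^T(\by-\hby)=\lambda\,{\bD^{h,\alpha}_2}^T{\bD^{h,\alpha}_2}\,\hba,
\]
which is just the equation $({\bB^\alpha}^T{\bB^\alpha}+\lambda{\bD^{h,\alpha}_2}^T{\bD^{h,\alpha}_2})\hba={\bB^\alpha}^T\by$ with $\hby={\bB^\alpha}\hba$ split off. Next I would invoke Proposition \ref{prop:HBreproduction}: since $e^{-\alpha x}$ and $xe^{-\alpha x}$ both belong to $\E_{4,\alpha}$, they admit expansions $e^{-\alpha x}=\sum_{j=0}^{n+1}c^{(1)}_j B^\alpha_j(x)$ and $xe^{-\alpha x}=\sum_{j=0}^{n+1}c^{(2)}_j B^\alpha_j(x)$ on $[a,b]$. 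Evaluating at the abscissae $x_i$ yields
\[
\bv^{(1)}:=(e^{-\alpha x_i})_{i=1}^m={\bB^\alpha}\bc^{(1)},\qquad \bv^{(2)}:=(x_i e^{-\alpha x_i})_{i=1}^m={\bB^\alpha}\bc^{(2)}.
\]

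Pairing the residual identity with $\bc^{(k)}$ then gives
\[
(\bv^{(k)})^T(\by-\hby)=\lambda\,({\bD^{h,\alpha}_2}\bc^{(k)})^T\,({\bD^{h,\alpha}_2}\hba),\qquad k=1,2,
\]
so it suffices to show ${\bD^{h,\alpha}_2}\bc^{(k)}=0$. The crux is the direct observation that $\Delta^{h,\alpha}_2$ annihilates both $e^{-\alpha x}$ and $xe^{-\alpha x}$: for $u(x)=e^{-\alpha x}$ one checks $1-2e^{-\alpha h}e^{\alpha h}+e^{-2\alpha h}e^{2\alpha h}=0$, and for $u(x)=xe^{-\alpha x}$ the factor $x-2(x-h)+(x-2h)$ vanishes. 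Applying $\Delta^{h,\alpha}_2$ term-by-term to the HB-spline expansions and running the same rearrangement already used in \eqref{eqRep2} in the proof of Proposition \ref{prof:HPreproduction}, linear independence of $\{B^\alpha_j\}_{j=0}^{n+1}$ forces $(\Delta^{h,\alpha}_2\bc^{(k)})_j=0$ for $j=2,\ldots,n+1$; these are precisely the $n$ entries of ${\bD^{h,\alpha}_2}\bc^{(k)}$, so the vector vanishes and \eqref{eq:moments} follows.

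The main delicate point is really just bookkeeping: ensuring that the coefficients $c^{(k)}_j$ produced by the local expansion on $[a,b]$ are the ones that the boundary HB-splines $B^\alpha_0,B^\alpha_1,B^\alpha_n,B^\alpha_{n+1}$ force to satisfy the difference equation, so that $\Delta^{h,\alpha}_2$ kills the full length-$(n+2)$ vector $\bc^{(k)}$ on the rows $j=2,\ldots,n+1$ indexed by ${\bD^{h,\alpha}_2}$. This is exactly the content of the linear-independence step already carried out in Proposition \ref{prof:HPreproduction}, so no new argument is needed beyond transposing it from $\hba$ to $\bc^{(k)}$.
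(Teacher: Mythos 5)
Your proposal is correct and follows essentially the same route as the paper: both derive the residual identity ${\bB^\alpha}^T(\by-\hby)=\lambda\,{\bD^{h,\alpha}_2}^T{\bD^{h,\alpha}_2}\hba$ from the normal equations, expand $e^{-\alpha x}$ and $xe^{-\alpha x}$ in the HB-spline basis, show via the annihilation property of $\Delta^{h,\alpha}_2$ and linear independence that ${\bD^{h,\alpha}_2}$ kills the coefficient vectors, and then pair against them. The only difference is cosmetic ordering, plus your explicit verification that $\Delta^{h,\alpha}_2$ annihilates $\E_{2,-\alpha}$, which the paper asserts without computation.
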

\begin{proof}

To see \eqref{eq:moments},  we start from the two equations defining the reproduction of $\E_{2, -\alpha}$
\begin{equation}\label{boh}
\sum_{j=0}^{n+1} c_j  B^\alpha_j(x)=e^{-\alpha x}, \quad\hbox{and}\quad  \sum_{j=0}^{n+1} d_j  B^\alpha_j(x)=xe^{-\alpha x}, \quad\hbox{with}\quad  c_j ,d_j\in \R,
\end{equation}
and evaluate them at $x_i,\ i=1,\ldots,m$. Hence,  for  $\bc=(c_0,\cdots, c_{n+1})$ and $\be=(e^{-\alpha x_1},\cdots, e^{-\alpha x_m})$ we have the equivalence 
\begin{equation}\label{boh3}
\sum_{j=0}^{n+1} c_j  B^\alpha_j(x_i)=e^{-\alpha x_i},\ i=1,\cdots,m \ \  \Leftrightarrow \ \  {\bf B}^\alpha{\bf c} =\be,
\end{equation}
and, for $\bd=(d_0,\cdots, d_{n+1})$ and $\bxe=(x_1e^{-\alpha x_1},\cdots, x_me^{-\alpha x_m})$, the equivalence
\begin{equation}\label{boh4}
\sum_{j=0}^{n+1} d_j  B^\alpha_j(x_i)=x_ie^{-\alpha x_i},\ i=1,\cdots,m \ \ \ \Leftrightarrow  \ \ {\bf B}^\alpha{\bf d} =\bxe.
\end{equation}
Now, in consideration of the linear independence of the HB-splines, with the same reasoning done in Proposition \ref{prof:HPreproduction},
from 
\begin{eqnarray}
0&=&\Delta^{h, \alpha}_2 \left( e^{-\alpha x}\right)=\Delta^{h, \alpha}_2\left(\sum_{j=0}^{n+1} c_j  B^\alpha_j(x)\right)=\nonumber \\
&=&\sum_{j=2}^{n+1} (\Delta^{h, \alpha}_2 {\bf c})_j  B^\alpha_j(x)+c_0  B^\alpha_0(x)+(c_1-2e^{-\alpha h}c_0)  B^\alpha_1(x),\nonumber 
\end{eqnarray}
we can write  
$$(\Delta^{h, \alpha}_2 {\bf c})_j=0 ,\ j=2,\ldots, n+1,$$ from which we conclude 
$
\bD^{h, \alpha}_2 {\bf c}={\bf 0}.
$
Similarly,  
\begin{eqnarray}
0&=&\Delta^{h, \alpha}_2 \left( x e^{-\alpha x}\right)=\Delta^{h, \alpha}_2\left( \sum_{j=0}^{n+1} d_j  B^\alpha_j(x)\right)=\nonumber \\
&=&\sum_{j=2}^{n+1} (\Delta^{h, \alpha}_2 {\bf d})_j  B^\alpha_j(x) +d_0  B^\alpha_0(x)+(d_1-2e^{-\alpha h}d_0) 
\nonumber 
\end{eqnarray}
implies 
\begin{equation}\label{boh5}
\quad (\Delta^{h, \alpha}_2 {\bf d})_j=0 ,\ j=2,\ldots, n+1,
\end{equation}
and therefore 
$\mathbf D^{h, \alpha}_2 {\bf d}={\bf 0}.  
$
Next, for the predicted values
$\hby={\bf B}^\alpha \hba$  we have 
\begin{equation}
\label{eqrep2}
({\bf B}^\alpha)^T\left( \by-{\bf B}^\alpha \hba \right)=\lambda {\mathbf D^{h, \alpha}_2}^T{\mathbf D^{h, \alpha}_2}\hba.
\end{equation}
Multiplication of both sides of (\ref{eqrep2}) respectively by $\bf c$ and  $\bf d$ yields
$$
{\bf c}^T({\bf B}^\alpha)^T\left( \by-{\bf B}^\alpha\hba \right)=\lambda\left({\mathbf D}^{h, \alpha}_2 {\bf c}\right)^T{\mathbf D}^{h, \alpha}_2\hba,$$
and
$$
{\bf d}^T({\bf B}^\alpha)^T\left( \by-{\bf B}^\alpha\hba \right)=\lambda \left({\mathbf D^{h, \alpha}_2} {\bf d}\right)^T{\mathbf D^{h, \alpha}_2}\hba.
$$
Now, since $\mathbf D^{h, \alpha}_2 {\bf c}=0$ and $\mathbf D^{h, \alpha}_2 {\bf d}=0$  using \eqref{boh3} and \eqref{boh4}
we arrive at 

$$
\be^T\left( \by-\hby \right)={\bf 0},\quad \hbox{and}\quad \bxe^T \left( \by-\hby \right)={\bf 0},$$
which are the vector versions of \eqref{eq:moments}.
\end{proof}

\begin{remark} Note that the exponential moments \eqref{eq:moments}, reduce to the classical moments preservation  whenever $\alpha=0$ that is to 
$$
\sum_{i=1}^{m}\hy_i=\sum_{i=1}^{m}y_i,\quad \hbox{and} \quad \sum_{i=1}^{m}x_i\hy_i=\sum_{i=1}^{m}x_iy_i.$$
\end{remark} 

\smallskip We conclude the paper with some figures showing the exponential-reproduction capabilities of HP-splines. Figures \ref{Figura1}-\ref{Figura2} refers to data taken from the exponential functions $e^{-x}$ while Figure 2 to data from the function $x\,e^{-x}$. They display  the graph of the HP-spline (black \lq$-$\rq\,) approximating the data for different selections of $\alpha$ combined with different level of absolute Gaussian noise with zero mean and standard deviation $\sigma$, both specified in the figure captions. The data sites  (red \lq$*$\rq\,)  and the spline knots location (blue \lq$\diamond$\rq\,) are also given in the figures.
 The smoothing parameter is always $\lambda=1$ since not relevant to our discussion.  The exact exponential fit is evident in absence of noise (left) while it is almost attained in case of a moderate noise (right) as well in case of an uncorrect selection of the frequency. For comparison, the graph  of the P-splines approximating the data is also given (magenta \lq$-.$\rq\,)  together with the graph of the function (blue \lq$--$\rq\,).
 \begin{figure}
\begin{subfigure}{.33\textwidth}
  \centering
  \includegraphics[width=\linewidth]{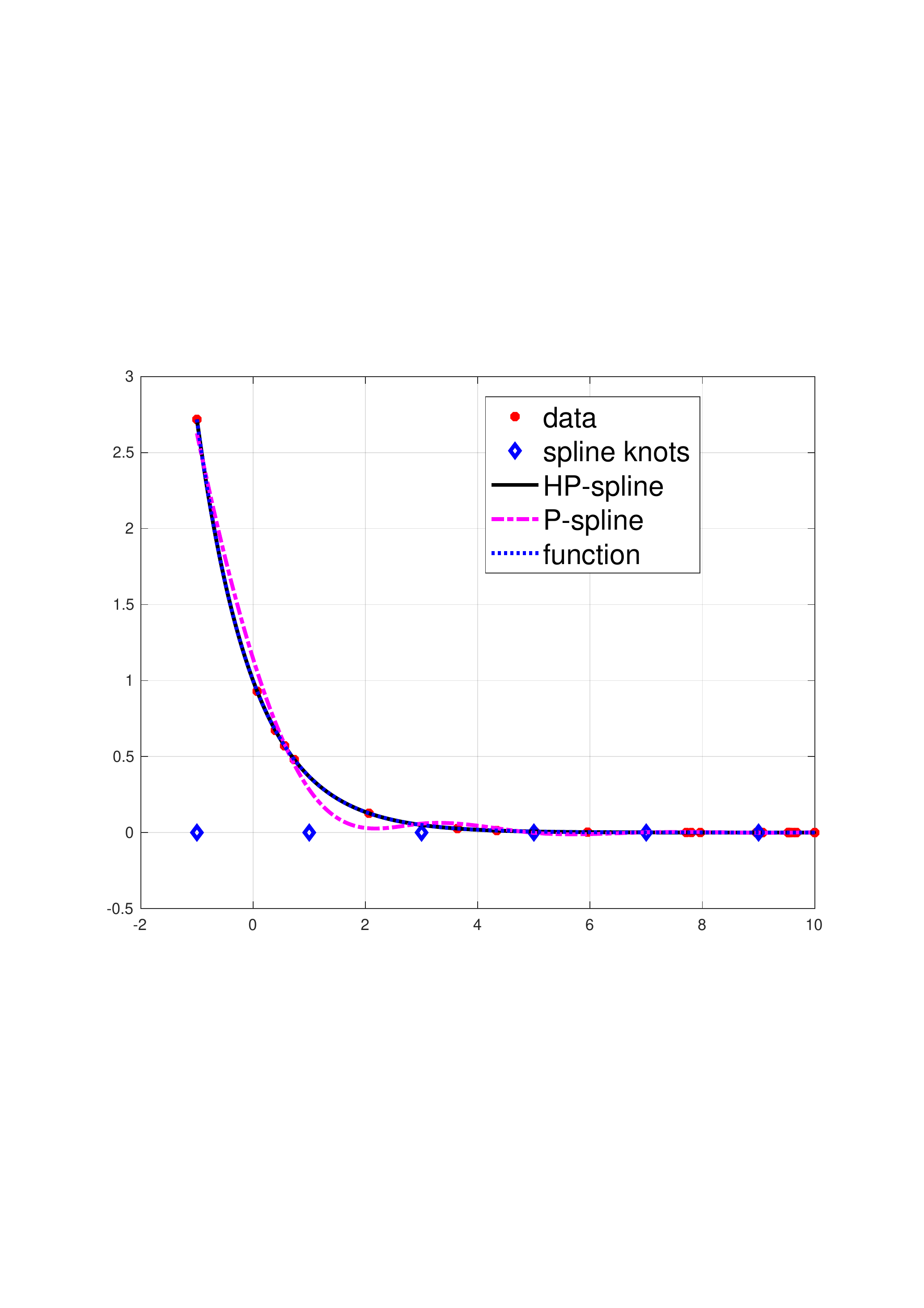}
  \label{fig:sfig1}
\end{subfigure}%
\begin{subfigure}{.33\textwidth}
  \centering
  \includegraphics[width=\linewidth]{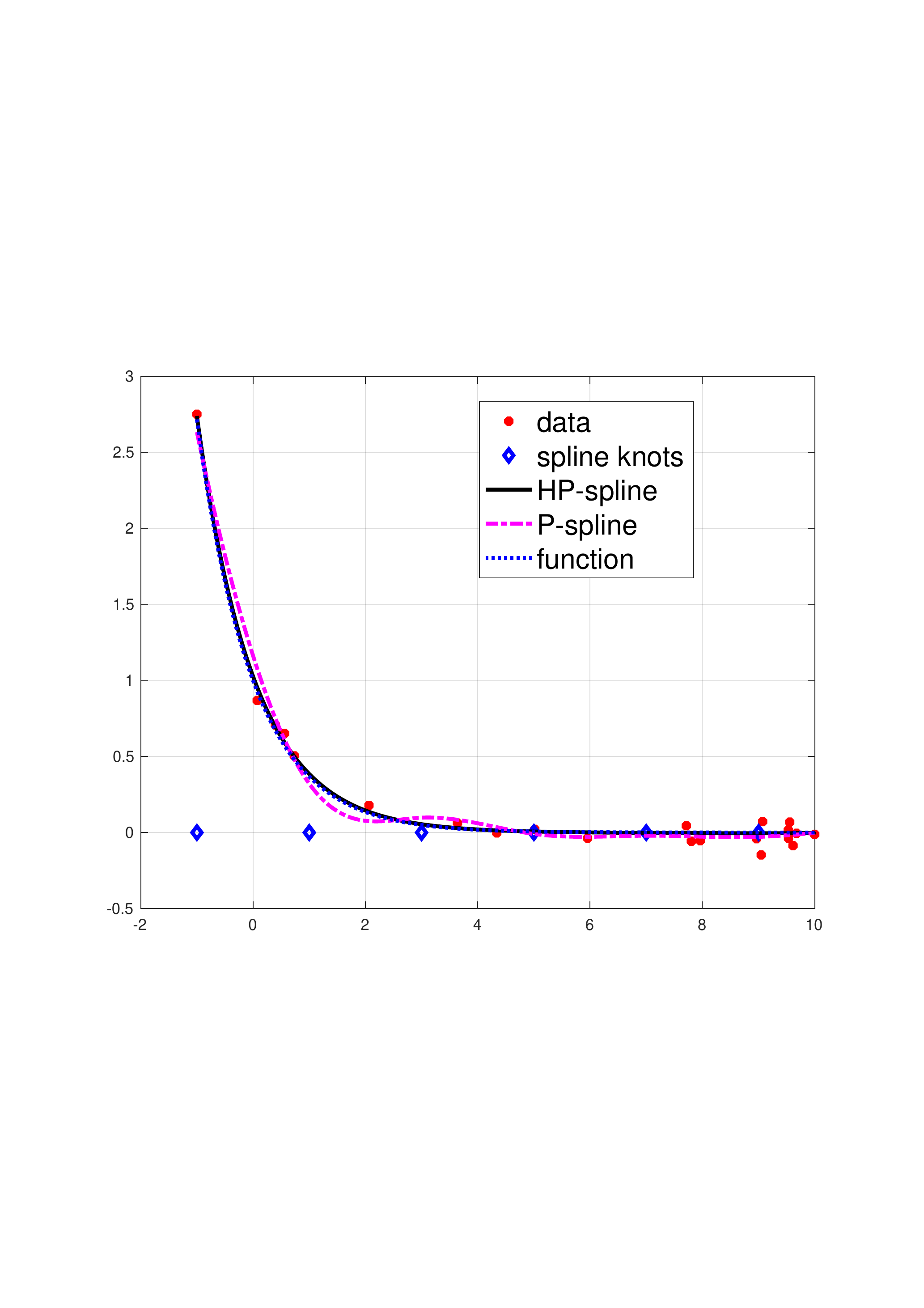}
  \label{fig:sfig2}
\end{subfigure}
\begin{subfigure}{.33\textwidth}
  \centering
  \includegraphics[width=\linewidth]{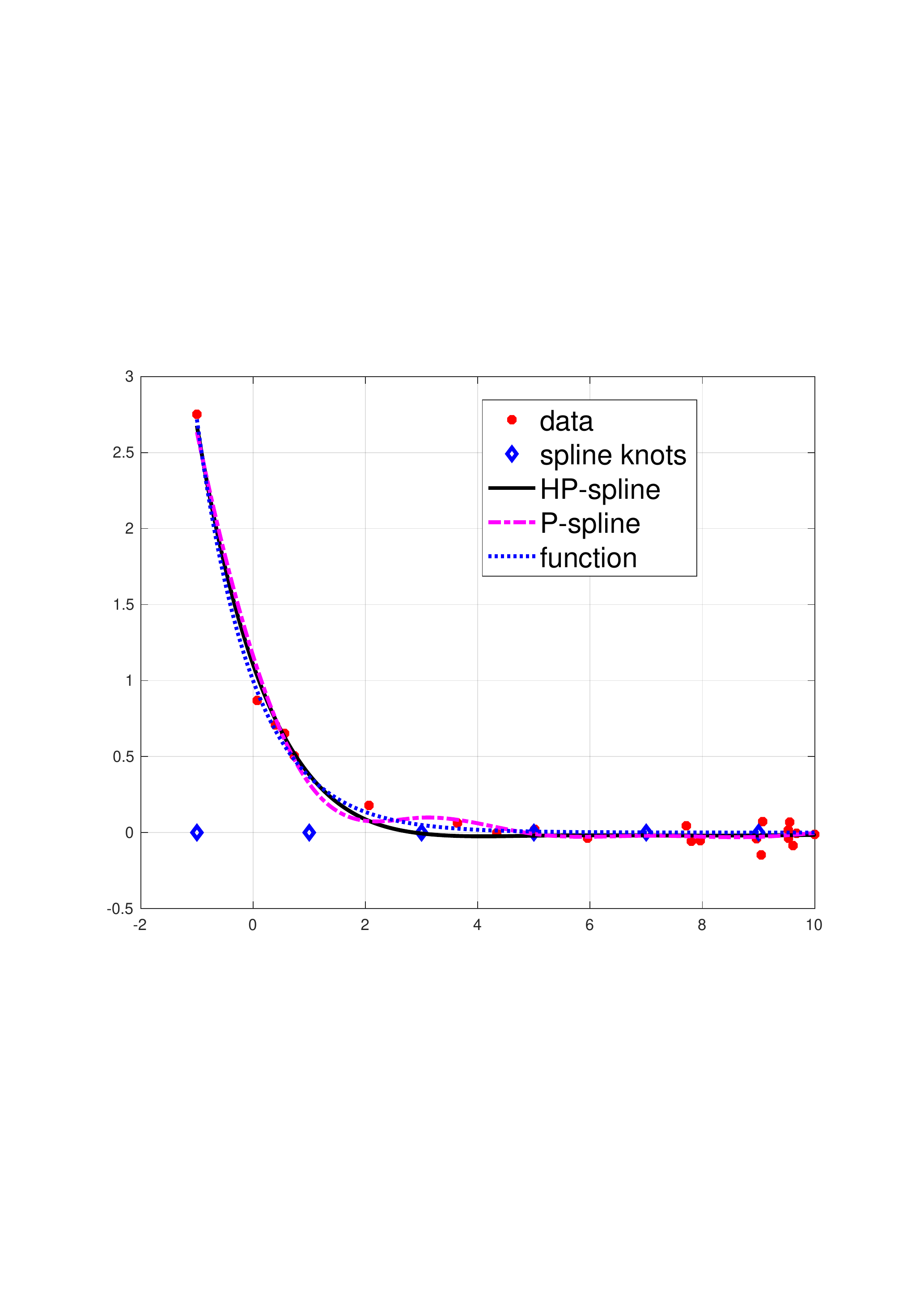}
  \label{fig:sfig3}
\end{subfigure}
 \vskip -2cm
\caption{{\footnotesize Function $e^{-x}$. From left to right values of $(\alpha,\sigma)$: $(-1, 0)$, $(-1, 0.5\cdot 10^{-2})$, $(-0.5,0.5\cdot 10^{-2})$.}}
\label{Figura1}
\end{figure} 

 \begin{figure}
\begin{subfigure}{.33\textwidth}
  \centering
  \includegraphics[width=\linewidth]{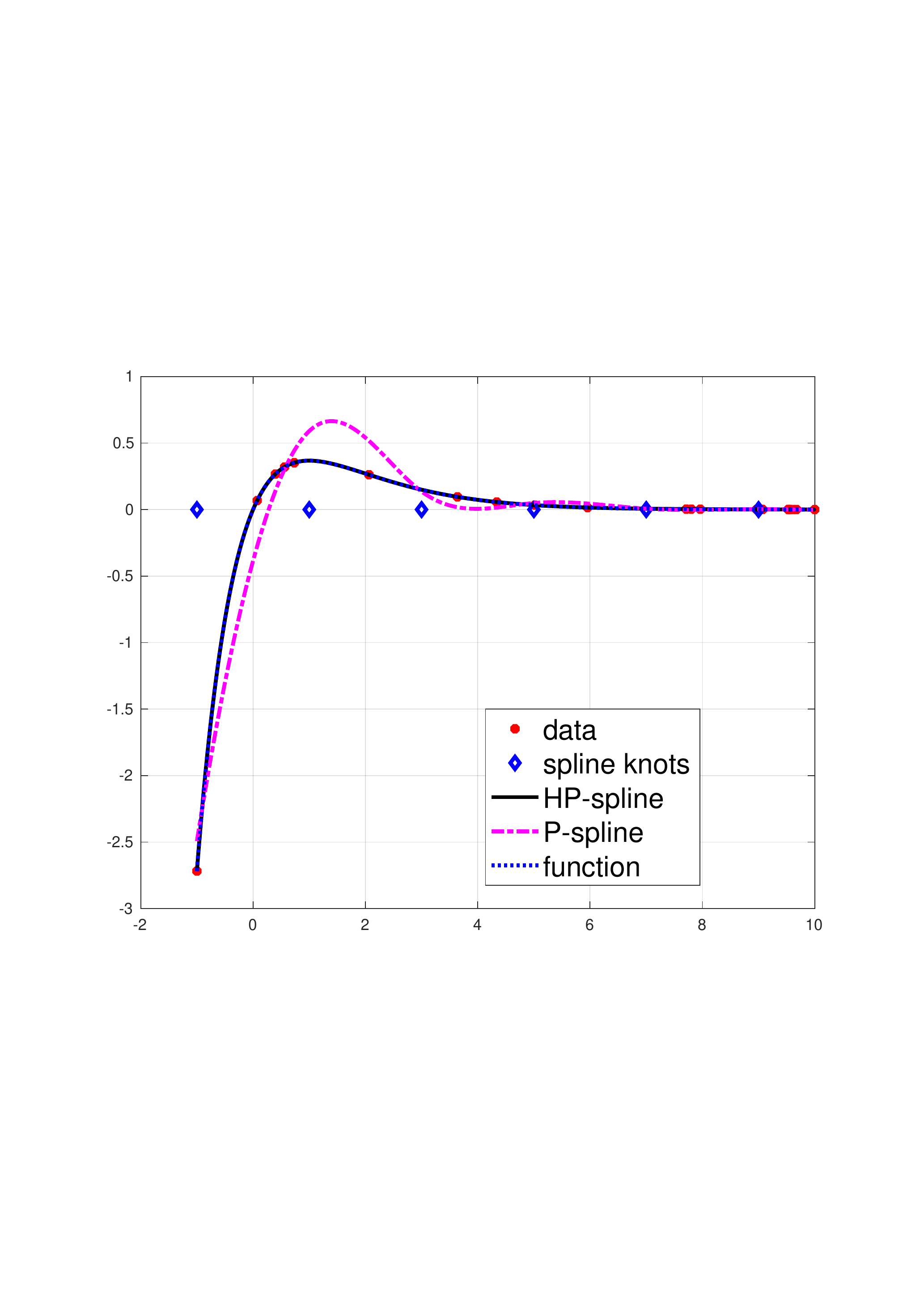}
  \label{fig:sfig21}
\end{subfigure}%
\begin{subfigure}{.33\textwidth}
  \centering
  \includegraphics[width=\linewidth]{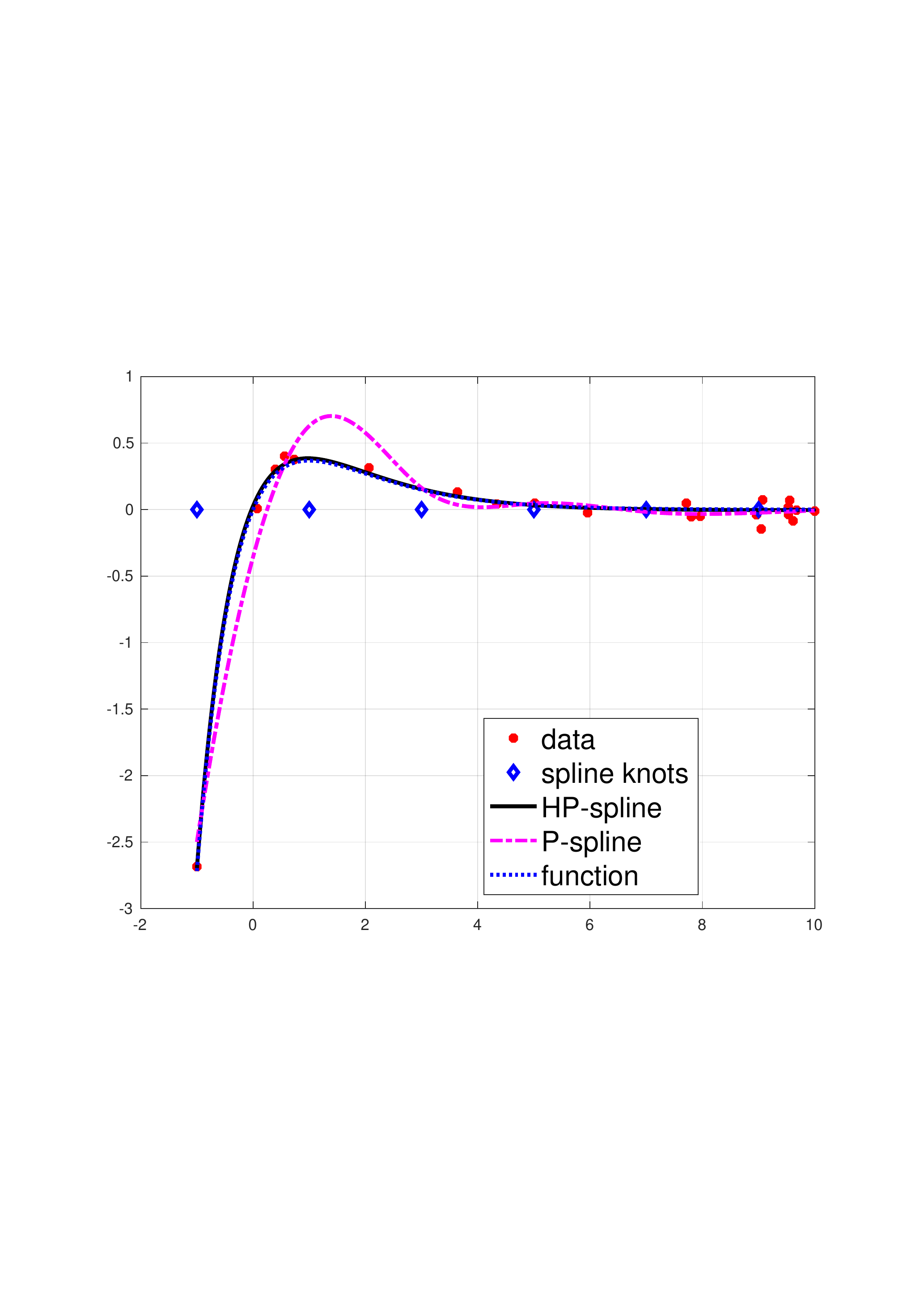}
  \label{fig:sfig22}
\end{subfigure}
\begin{subfigure}{.33\textwidth}
  \centering
  \includegraphics[width=\linewidth]{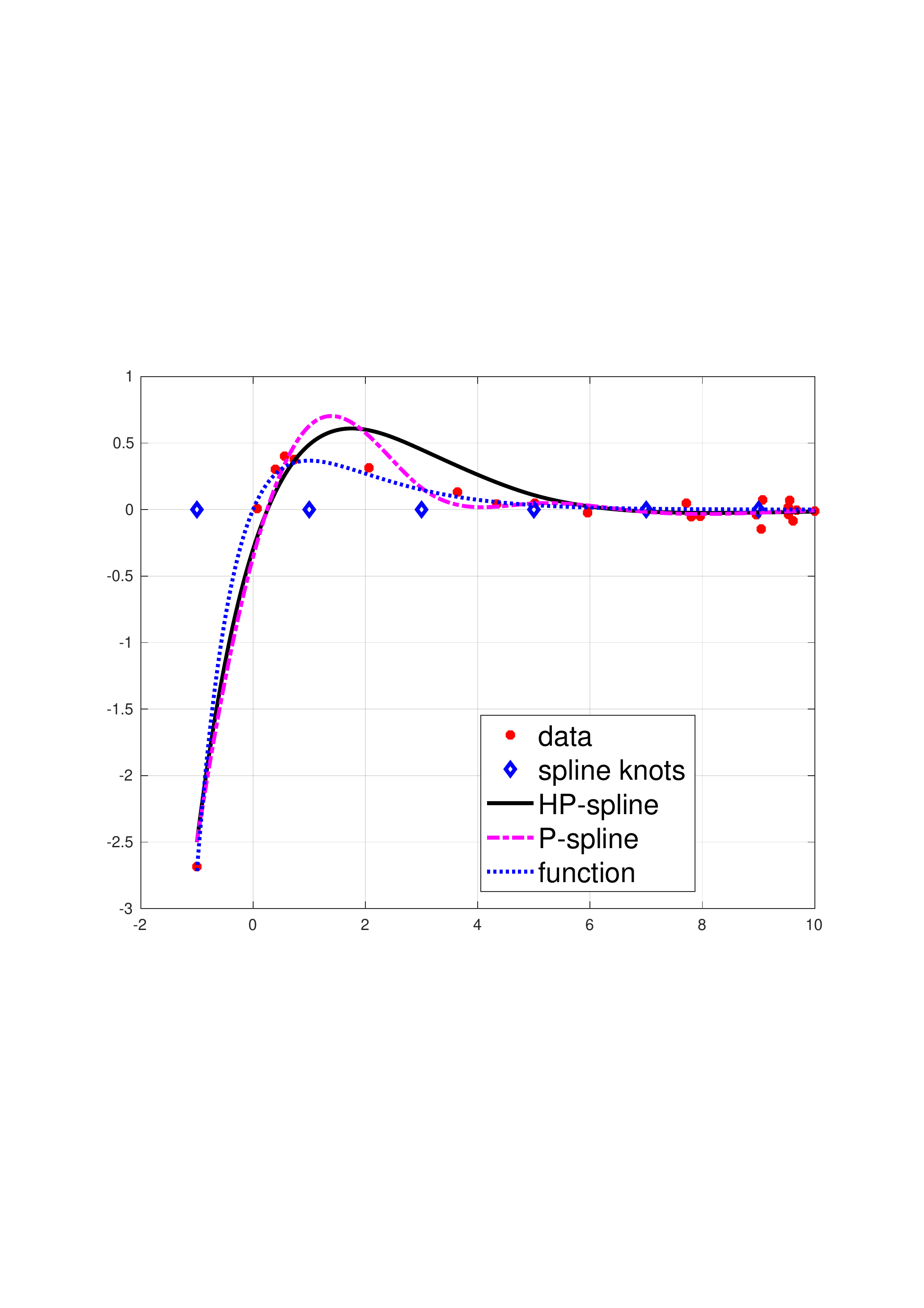}
  \label{fig:sfig23}
\end{subfigure}
 \vskip -2cm
\caption{{\footnotesize Function $xe^{-x}$. From left to right values of $(\alpha,\sigma)$: $(-1, 0)$, $(-1, 0.5\cdot 10^{-2})$, $(-0.5,0.5\cdot 10^{-2})$.}}
\label{Figura2}
\end{figure}

\section{Conclusions}\label{sec:conclusion}
This short paper enriches the study of HP-splines, penalized hyperbolic - splines with segments in the space $\E_{4,\alpha}$ consisting  in the exponential polynomials $\{e^{\alpha x},\, e^{-\alpha x},\, xe^{\alpha x},\, xe^{-\alpha x}\}$, where $\alpha$ is a real frequency. In particular, it 
investigates two important analytical properties of  HP-splines: that they exactly fit function in $\E_{2,-\alpha}$ and  that they conserve  the first and second \lq exponential\rq\ moments, independently to the value of the smoothing parameter $\lambda$.
A few numerical examples of reproduction are shown. A dynamic selection strategy of the parameter $\alpha$, that certainly deserve more attention,  is presently under investigation.

\bigskip {\bf Acknowledgement}\\
The authors are members of INdAM-GNCS, partially supporting this work. They are also member of RItA (Rete ITaliana di Approssimazione) and UMI-T.A.A. group. 

\end{document}